\title{\Large Corrigendum to: Zero divisor graph of a lattice with respect to an ideal}
\author{}
\date{}
\theoremstyle{plain}
\newtheorem{theorem}{Theorem}[section]
\newtheorem{proposition}[theorem]{Proposition}
\newtheorem{lemma}[theorem]{Lemma}
\newtheorem{example}[theorem]{Example}
\theoremstyle{definition}
\newtheorem{definition}[theorem]{Definition}
\begin{document}
\maketitle
\begin{center}
\textbf{Ahmed Gaber$^{1}$ and Mona Tarek $^{2}$}\vspace{0.15 cm}\\
\small{$^{1,2}$Department of Mathematics\\Faculty of Science, Ain 
Shams University, Egypt\\
$^{1}$a.gaber@sci.asu.edu.eg\vspace{0.15cm}\\ 
$^{2}$Mona.Saad@sci.asu.edu.eg\vspace{0.15cm}\\ }
\end{center}
\vspace{0.1 cm}
\begin{abstract} 
In this paper, we point out several errors in [M.Afkhami, K.Khashyarmanesh and K.Nafar, {\it  Zero divisor graph of a lattice with respect to an ideal}, Beitr Algebra Geom {\bf 56} (2015), 217-225.]. In the previous article, Afkhami claimed that the intersection of all prime ideals belonging to an ideal $I$ of a distributive lattice $L$ equals to $I$. In this corrigendum, a counterexample for this sentence is constructed. Afkhami demonstrate a counterexample to a certain theorem which satisfies the necessary and the sufficient condition of the theorem. On the other hand, we reform many proofs in Afkhami's article.       
\end{abstract}
\begin{flushleft}
\textbf{2020 Mathematics Subject Classification}: 05C25, 06B99.\\
\end{flushleft}
\begin{flushleft}
\textbf{Keywords}: Bounded distributive Lattice, Ideal, Graph, Zero divisor graph.
\end{flushleft}
\section{\textbf{I}ntroduction}

\hspace*{0.4 cm}
Among the results that Afkhami et al. show in $[1]$, we state the following. For a lattice $L$, $ \Gamma (L)$  is the graph whose vertex set is the set \begin{center}
         $ \{x \in L ; \ x \wedge y =0 $ for some non-zero element $ y \in L \}$.
    \end{center}
    Two distinct vertices are adjacent if and only if $ x\wedge y = 0 $.
    \begin{definition}$[1,$ Definition $3.1]$
     Let $I$ be an ideal of a lattice $L$. Introduce the zero divisor
     graph of $L$ with respect to $I$ notated by $\Gamma_I (L)$, as follows 
     \begin{center}
         $ \{x \in L\setminus I ; \ x \wedge y =0 $ for some 
         non-zero element $ y \in L\setminus I \}$.
    \end{center}
    Two distinct vertices are adjacent if and only if $ x\wedge y \in I$. 
    \end{definition}
    Afkhami et al. claim that if $I=\{0\}$. Then $\Gamma_I (L)$ is isomorphic to $\Gamma (L)$. Consider the following example;
    \begin{example}
 Consider the following Hasse diagram  $L$  in Figure (1). Then $(L, ^\vee, ^\wedge ) $ is a bounded distributive lattice. Obviously, $V(\Gamma (L))=\{0, a, b, c, x \}$. As $ 0\wedge z=a\wedge b= a\wedge x= c\wedge b= c\wedge x= 0 $. Also, $V(\Gamma_I (L))=\{a, b, c, x \}$. Then $|V(\Gamma (L))|=5 \not =|V(\Gamma_I (L))|=4$. Hence $\Gamma_I (L)$ is not isomorphic to $\Gamma (L)$.  
 \begin{center}
\begin{tikzpicture}[scale=.5]
  \node (max) at (0,4) {$1$};
  \node (z) at (-2,2) {$z$};
  \node (d) at (2,2) {$d$};
\node (a) at (-4,0) {$a$}; \node (y) at (0,0) {$y$};
  \node (b) at (4,0) {$b$};
  \node (c) at (-2,-2) {$c$};
\node (x) at (2,-2) {$x$};
  \node (min) at (0,-4) {$0$};
\draw (min) -- (c) -- (a) -- (z) -- (max) -- (d) -- (b) --
  (x) -- (min);
\draw (c) -- (y) -- (x); \draw (z) -- (y) -- (d);
\end{tikzpicture}
\end{center}
 \begin{center}
     Fig. (1): $L$ 
 \end{center}
 In fact $\Gamma_I (L)$ is  isomorphic to non zero zero gragh $\Gamma (L)$.
 \end{example}
     Afkhami presents the following results;
  \begin{proposition}$[1,$ Proposition $3.2]$
   If $I$ is a proper filter of a lattice $L$, then $ \Gamma_I (L)$ 
   is connected with $diam( \Gamma_I (L)) \leq 3 $. Moreover 
   $gr(\Gamma_I (L)) \leq 7$, providing that $ \Gamma_I (L)$ contains a cycle.
    \end{proposition}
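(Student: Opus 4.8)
The plan is to follow the standard template for zero divisor graphs: produce a single ``hub'' to which every vertex is joined by a short path, and then bound the distance between two arbitrary vertices by routing through hubs. Each vertex $x$ comes, by definition, with a nonzero partner $y\in L\setminus I$ satisfying $x\wedge y=0$, and the engine of every such argument is that an edge between $p$ and $q$ means exactly $p\wedge q\in I$. So I would first hope to use the partner-meets $x\wedge y=0$ as a reliable source of edges: if $0\in I$ then $x$ is adjacent to $y$, every vertex sits next to its partner, and any two vertices $u,v$ can be linked in at most three steps by a short case analysis on the meets $u\wedge v,\ u\wedge b,\ a\wedge v,\ a\wedge b$, where $a,b$ are partners of $u,v$. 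The girth bound $\le 7$ would then drop out of the same local picture by shortening any long cycle.

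First I would try to carry out that case analysis, checking for each pair $u,v$ which of the four meets above lands in $I$ and assembling a walk of length $\le 3$ whose consecutive meets all lie in $I$. Everything depends on having enough meets inside $I$, and the only canonical ones handed to us by the definition are the partner-meets, all equal to $0$.

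The hard part will be exactly this edge supply, and at this point the hypothesis as worded collapses. For a \emph{proper filter} $I$ one has $0\notin I$, so the partner-meets $x\wedge y=0$ produce no edges; in fact \emph{no} edge can exist at all, since an edge $p\sim q$ would mean $p\wedge q\in I$, and as $p\wedge q\le p$ with $I$ upward closed this would force $p\in I$, contradicting $p\in L\setminus I$. Hence $\Gamma_I(L)$ is edgeless for every proper filter $I$, so it is connected only when it has at most one vertex. In general it has several: for $L=\mathcal{P}(\{1,2,3\})$ with meet $=\cap$ and the proper filter $I=\{S:1\in S\}$, the vertices are $\emptyset,\{2\},\{3\}$ with no edges among them, so $\Gamma_I(L)$ is disconnected and the bound $diam(\Gamma_I(L))\le 3$ fails; the cycle clause is vacuous as well, because an edgeless graph contains no cycle.

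I therefore expect the substance of this entry to be that ``proper filter'' cannot be the intended hypothesis: the template above works only when $0\in I$, which is automatic for a proper \emph{ideal} but impossible for a proper filter. Once $0\in I$ is restored, the edges $x\sim y$ arising from $x\wedge y=0$ reappear, every vertex is adjacent to its partner, and the $diam\le 3$ and $gr\le 7$ conclusions follow from the case analysis sketched above.
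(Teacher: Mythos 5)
Your central observation is correct, and it is the whole story here: as printed, with $I$ a \emph{proper filter}, the proposition is false. An edge $p\sim q$ in $\Gamma_I(L)$ means $p\wedge q\in I$, and since $p\wedge q\le p$ and a filter is upward closed, this would force $p\in I$, contradicting $p\in L\setminus I$; so the graph is edgeless, and your example $L=\mathcal{P}(\{1,2,3\})$ with $I=\{S\subseteq\{1,2,3\}:1\in S\}$ exhibits three isolated vertices $\emptyset,\{2\},\{3\}$ (and even if one insists on excluding $0$ as a vertex, two isolated vertices remain), so connectedness and $diam(\Gamma_I(L))\le 3$ fail, while the girth clause is vacuous. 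There is no proof in the paper to compare yours against: the corrigendum states this proposition without proof, quoting it from $[1]$, and the word ``filter'' is a misquotation --- in $[1$, Proposition $3.2]$ the hypothesis is that $I$ is an \emph{ideal}. That is also the only reading compatible with how the corrigendum itself uses the proposition later (in the proof of Theorem 1.5 and in the dismissal of case $(4)$, where $I$ is throughout an ideal and edges are plentiful). In effect you have caught an error in the corrigendum, which is mildly ironic given its purpose.

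Your sketched repair also matches the standard argument behind $[1$, Proposition $3.2]$. With $I$ a proper ideal one has $0\in I$ and $I$ downward closed, so every vertex is adjacent to its partner; given vertices $u,v$ with partners $a,b$, either one of $u\wedge v$, $u\wedge b$, $a\wedge v$, $a\wedge b$ lies in $I$, producing a path of length at most $3$, or else $a\wedge b\notin I$ is itself a vertex adjacent to both $u$ and $v$, since $(a\wedge b)\wedge u\le a\wedge u=0\in I$ and likewise for $v$, giving a path of length $2$; the bound $gr(\Gamma_I(L))\le 7$ then follows in the usual Redmond-style way from $diam\le 3$. If you write this out in full, handle the degenerate cases $a=b$ and $a\wedge b\in\{u,v\}$ separately, and note one further discrepancy you implicitly worked around: the vertex condition quoted in this paper ($x\wedge y=0$) differs from the one in $[1]$ ($x\wedge y\in I$); under the latter, the partner-meets land in $I$ rather than being $0$, and the step $(a\wedge b)\wedge u\in I$ then uses downward closure of the ideal instead of the meets being zero.
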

    For an ideal $I$ of a lattice $L$ and let $x \in L$, we set 
    \begin{center}
        $(I:x) = \{z \in L : z \wedge x \in I \}$.
    \end{center}
    It is easy to see that if $L$ is distributive, then $(I:x)$ is an ideal of $L$.
    \begin{lemma}$[1,$ Lemma $3.3]$
   For a proper ideal $I$ of a distributive lattice $L$. If 
   $a-x-y $ is a path in $\Gamma_I(L)$, then either $I \cup \{x \}$ is an ideal of $L$, or $a-x-y$ is contained in a cycle of length 
   at most $4$.
    \end{lemma}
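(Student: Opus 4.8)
The plan is to translate the graph-theoretic hypothesis into lattice arithmetic and then read off the dichotomy from a single auxiliary ideal. From the path $a-x-y$ we know that $a,x,y$ are three distinct vertices with $a\wedge x\in I$ and $x\wedge y\in I$, and in particular $a,x,y\notin I$. Since the conclusion is an ``either/or'', the natural move is to isolate the object whose being an ideal is in question, namely $I\cup\{x\}$, and to compare it with a canonical ideal that it is forced to sit inside.

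For that I would introduce $K=(I:a)\cap(I:y)$. By the remark preceding the statement, $(I:a)$ and $(I:y)$ are ideals of the distributive lattice $L$, hence so is their intersection $K$; concretely $K$ is the set of elements meeting both $a$ and $y$ into $I$, i.e. the potential common neighbours of $a$ and $y$. First I would check the inclusion $I\cup\{x\}\subseteq K$: for $i\in I$ we have $i\wedge a\le i\in I$ and $i\wedge y\le i\in I$, so $i\in K$; and $x\in K$ because $x\wedge a\in I$ and $x\wedge y\in I$ are precisely the two edges of the path. The argument then splits on whether this inclusion is an equality. If $K=I\cup\{x\}$, then $I\cup\{x\}$ coincides with the ideal $K$ and we are in the first alternative. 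Otherwise there is some $w\in K\setminus(I\cup\{x\})$, i.e. $w\notin I$, $w\neq x$, with $w\wedge a\in I$ and $w\wedge y\in I$; this $w$ is a common neighbour of $a$ and $y$ different from $x$, and $a-x-y-w-a$ is a cycle of length $4$ containing the path. (As a side remark, whenever $a\wedge y\in I$ one already gets the triangle $a-x-y-a$, so the substantive case is $a\wedge y\notin I$, which is exactly what the cycle through $w$ handles.)

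The step I expect to require the most care is confirming that $w$ is a genuine fourth vertex and that all four edges are present. From $w\wedge a\in I$ and $a\notin I$ one gets $w\neq a$, and symmetrically $w\neq y$; together with $w\neq x$ this makes $a,x,y,w$ pairwise distinct, while $w\notin I$ with $w\wedge a\in I$ shows $w$ is a vertex of $\Gamma_I(L)$, and the four meets $a\wedge x,\ x\wedge y,\ y\wedge w,\ w\wedge a$ all lying in $I$ give the four edges. If one prefers a hands-on description of $w$ rather than extracting it abstractly from $K$, the failure of $I\cup\{x\}$ to be an ideal can be unpacked directly: either $I\cup\{x\}$ is not a down-set, producing some $z<x$ with $z\notin I$ (and then $z\wedge a\le x\wedge a\in I$ and $z\wedge y\le x\wedge y\in I$), or it is not closed under joins, producing $m=x\vee i>x$ with $i\in I$ (and then distributivity gives $m\wedge a=(x\wedge a)\vee(i\wedge a)\in I$ and likewise $m\wedge y\in I$); in either case the witness plays the role of $w$. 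Both routes finish the proof, but the $K$-based formulation keeps the bookkeeping minimal.
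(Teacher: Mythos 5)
Your argument is correct. Note that this corrigendum only restates $[1,$ Lemma $3.3]$ without reproving it, so the relevant comparison is with the proof in $[1]$, and that proof is essentially your ``hands-on'' variant at the end: assuming $I\cup\{x\}$ is not an ideal, one splits into the case where it fails to be a down-set and the case where it fails to be closed under joins with elements of $I$, and in each case manufactures a common neighbour $w$ of $a$ and $y$ (distributivity being needed to see $(x\vee i)\wedge a=(x\wedge a)\vee(i\wedge a)\in I$), which yields the $4$-cycle $a-x-y-w-a$; the triangle $a-x-y-a$ disposes of the case $a\wedge y\in I$. Your primary route through $K=(I:a)\cap(I:y)$ is a genuinely different and cleaner packaging of the same idea: it replaces the two-case analysis by the single observation that $I\cup\{x\}$ sits inside the ideal $K$, so either the inclusion is an equality (first alternative of the lemma) or any $w\in K\setminus(I\cup\{x\})$ serves as the fourth vertex (second alternative). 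What this buys is that distributivity enters in exactly one place, namely the fact recorded before the lemma that $(I:a)$ and $(I:y)$ are ideals, and the remaining verifications ($w\neq a$ because $w\wedge a\in I$ while $a\notin I$, symmetrically $w\neq y$, and the four edges) are immediate, exactly as you state them. One small caution: the vertex-set condition in Definition 1.1 of this corrigendum is misprinted as $x\wedge y=0$; you are, correctly, using the intended condition $x\wedge y\in I$ from $[1]$, which is what guarantees that $w$ is in fact a vertex of $\Gamma_I(L)$.
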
 
    In $[1,$ Theorem $3.4]$, Afkhami constructs the proof by cases. In fact case $(4)$ is impossible to happen. In this corrigendum theorem 3.4 is presented with a reformed proof. 
\begin{theorem} $[1,$ Theorem $3.4]$
    For a distributive lattice $L$ and an ideal $I$ of $L$. If
    $\Gamma_I(L)$ contains a cycle , then the core $K$ of $\Gamma_I(L)$ is a union of $3$-cycles or $4$-cycles. Moreover, if $|V(\Gamma_I(L))| \geq 3 $, then any vertex in $\Gamma_I(L)$ is either a vertex of the $K$ or a vertex of degree one.
\end{theorem}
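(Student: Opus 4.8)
The plan is to handle the two assertions separately, writing $K$ for the union of all cycles of $\Gamma_I(L)$, so that an edge lies in $K$ exactly when it lies on some cycle, and reading ``$K$ is a union of $3$-cycles or $4$-cycles'' as the statement that \emph{every edge of $K$ lies on a cycle of length $3$ or $4$}. Throughout I will use only that $I$ is a down-set for the first assertion, together with Lemma 3.3 for the second.

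For the first assertion I would fix an edge $\{x,y\}$ lying on a cycle $C$, let $a$ be the neighbour of $x$ on $C$ other than $y$, and let $b$ be the neighbour of $y$ on $C$ other than $x$; thus $a\wedge x\in I$ and $b\wedge y\in I$. The whole idea is to test the single element $a\wedge b$. Since $(a\wedge b)\wedge x\le a\wedge x\in I$ and $(a\wedge b)\wedge y\le b\wedge y\in I$ and $I$ is downward closed, both of these meets lie in $I$. If $a\wedge b\notin I$, then $a\wedge b$ is a common neighbour of $x$ and $y$ distinct from each (for instance $a\wedge b=x$ would force $x\le a$, hence $x=a\wedge x\in I$, a contradiction), so $x-(a\wedge b)-y-x$ is a triangle through the edge. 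If instead $a\wedge b\in I$, then $a\neq b$ because $a\notin I$, and $x-y-b-a-x$ is a $4$-cycle through the edge, its four vertices being distinct precisely because $a,b$ are the ``other'' neighbours on $C$. Either way the edge lies on a $3$- or $4$-cycle; notice this step needs neither distributivity nor Lemma 3.3.

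For the second assertion I would argue by contradiction: suppose some vertex $v\notin K$ has $\deg v\ge 2$, and pick two neighbours $a,b$, giving a path $a-v-b$. By Lemma 3.3 either $a-v-b$ lies on a cycle of length at most $4$, which would place $v$ on a cycle and contradict $v\notin K$, or $I\cup\{v\}$ is an ideal. So I may assume $I\cup\{v\}$ is an ideal, and the first move is to unwind what this forces: being downward closed puts every $z<v$ into $I$, while being join-closed puts every element of $I$ below $v$ (if $i\in I$ then $i\vee v\in I\cup\{v\}$ cannot lie in $I$, lest $v\in I$, so $i\vee v=v$), whence $I=\{z\in L:\ z<v\}$ and $v$ is join-irreducible. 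Consequently every neighbour of $v$ is incomparable to $v$, and, since $v\notin K$ excludes a triangle through $v$, \emph{no two neighbours of $v$ are adjacent}.

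The main obstacle is now to produce a contradiction, and this is exactly where the global hypothesis that $\Gamma_I(L)$ contains a cycle must enter. I would take any cycle $C\colon u_1-u_2-\cdots-u_k-u_1$; since $v\notin K$, $v$ lies on no cycle, so $v\notin C$ and all $u_j\neq v$. If every $u_i$ were incomparable to $v$, then each $u_i$ would be a neighbour of $v$ and two consecutive ones would be adjacent neighbours of $v$, which is impossible; hence some $u_i$ satisfies $u_i\ge v$, using that $u_i\notin I$ forces $u_i\not<v$. From $v\le u_i$ I get $u_{i-1}\wedge v\le u_{i-1}\wedge u_i\in I$ and likewise $u_{i+1}\wedge v\in I$, so both $u_{i-1}$ and $u_{i+1}$ are adjacent to $v$; replacing $u_i$ by $v$ in $C$ then yields a genuine cycle through $v$, the vertices staying distinct because $v\notin C$. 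This contradicts $v\notin K$, so $\deg v\le 1$; as every vertex of $\Gamma_I(L)$ has at least one neighbour, a vertex outside $K$ has degree exactly one, which finishes the proof. I expect the delicate points to be checking that the rerouted closed walk is an honest cycle and verifying the distinctness conditions for the $3$- and $4$-cycles in the first part.
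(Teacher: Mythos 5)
Your proof is correct, but it takes a genuinely different route from the paper's on both halves, and in one place it is more complete than the published argument. For the first assertion the paper fixes a vertex $a$ of $K$ lying on a cycle of length greater than $4$, invokes Lemma 3.3 to conclude that $I\cup\{a\}$ is an ideal, deduces $a\wedge c=a\wedge d=a$ for the cycle vertices $c,d$ at distance $2$ and $3$ from $a$, and reaches the contradiction $a=a\wedge(c\wedge d)\in I$. You instead test the single meet $a\wedge b$ of the two outer neighbours of an edge and exhibit the short cycle explicitly; this needs neither distributivity nor Lemma 3.3, only that $I$ is a down-set, and it yields the stronger edge form of the statement (every edge of $K$ lies on a $3$- or $4$-cycle) rather than the vertex form the paper actually establishes. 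For the second assertion both arguments pass through Lemma 3.3 to obtain that $I\cup\{v\}$ is an ideal, but the paper then asserts, for some adjacent pair $c,d$, that $x\wedge c=x\wedge d=x$ and hence $x\in I$; this requires two consecutive vertices of a cycle both non-adjacent to $x$, which need not exist, and the paper does not justify it. Your identification $I=\{z\in L:\ z<v\}$, the resulting dichotomy that every other vertex is either above $v$ or adjacent to $v$, and the rerouting of an existing cycle through $v$ supply exactly the case analysis the paper skips. The cost is a somewhat longer argument; the gain is a self-contained proof of the first half and a second half that closes a real gap in the published one.
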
    
 \begin{proof}
 Let $a$ be an arbitrary element in $K$ such that $a$ does not belong  to any $3$-cycles or $4$-cycles in $\Gamma_I(L)$. Suppose that $a$ is in a cycle $ a-b-c-d-...-x-a$ of length greater than $4$. By Lemma 1.4,  $I \cup \{a \}$ is a ideal. Obviously $a \wedge d \in  I \cup \{a \} $ and $a \wedge d \not \in  I $ as $a$ and $d$ are not adjacent, then $a \wedge d = a$. Similarly, $a \wedge c = a$. Thus $a \wedge (d \wedge  c) = a  \in I$. Which is a contradiction. Moreover, assume that $|V(\Gamma_I(L))| \geq 3 $. Let $ x $ be an  element in $V( \Gamma_I(L)) $ such that $x$ does not belong to $K$ neither a vertex of degree one. Suppose that $x$ is of degree $n$ for natural number $n$. Hence $x$ is adjacent to $n$ distinct vertices $ a,b,e,f,...$. Since $\Gamma_I(L)$ contains a cycle and by proposition 1.3,
$a-x-b-c-d-b$ is a path in $\Gamma_I(L)$. By Lemma 1.4, $I \cup \{x \}$ is an ideal of $L$ and by an argument similar to the one used in the first paragraph we get to $ (c \wedge d) \wedge x = x \in I $. Which is a contradiction. 
 \end{proof}
  Afkhami mentions in the proof of the previous theorem that the 
  case $(4)$: $a-x-y-b$ is a path in $\Gamma_I(L)$, where $a$ is a 
  vertex of degree one and $b \in K$ is reduced by the case $(3)$: 
  $a-x-b$ is a path in $\Gamma_I(L)$, where $a$ is a vertex of 
  degree one and $b \in K$. Which is not 
   true. In fact, case $(4)$ is impossible. Assume case $(4)$, by 
   the first part of theorem 1.5, $b$ at least in a cycle of length 
   $3$. Then $a-x-y-b-d-e-b $, hence the length between $a$ and $d$ is 
   more than 3. Which contradicts proposition 1.3.\\
   
   For every $a \in L$, the notation ${[a]}^u$ stands for the set $\{ x\in L ; a \leq x \}$.
   \begin{proposition}$[1,$ Proposition $3.6]$
     Let $L$ be a distributive lattice and $I$ be an ideal of $L$. If
     $ \bigcap_{i \in I}{[a]}^u =\{ 1\} $, then $\Gamma_I(L)$ has no cut point.
    \end{proposition}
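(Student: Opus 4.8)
The plan is to argue by contradiction: assume $v$ is a cut point of $\Gamma_I(L)$ and then produce an element strictly below $1$ lying above every element of $I$, contradicting the hypothesis $\bigcap_{a\in I}[a]^u=\{1\}$ (i.e. that $1$ is the only common upper bound of $I$). First I would exploit the defining property of a cut point to select convenient vertices. If $v$ is a cut point, then some component of $\Gamma_I(L)$ is split by the removal of $v$, and each resulting piece must contain a neighbour of $v$; hence I can choose neighbours $x$ and $y$ of $v$ lying in distinct components of $\Gamma_I(L)-v$. By construction $x\wedge v\in I$ and $y\wedge v\in I$, while $x\wedge y\notin I$ (otherwise $x-y$ would be an edge bypassing $v$, putting $x,y$ in the same component of $\Gamma_I(L)-v$).

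The central computation is to identify the common neighbours of $x$ and $y$ with the ideal $(I:x\vee y)$. Using distributivity together with the fact that $I$ is a down-set closed under joins, I would verify that $(I:x)\cap(I:y)=(I:x\vee y)$, and that a vertex $z\neq x,y$ is a common neighbour of $x$ and $y$ precisely when $z\in(I:x\vee y)\setminus I$. Since $x\wedge(x\vee y)=x\notin I$ and similarly for $y$, neither $x$ nor $y$ lies in $(I:x\vee y)$; and from $(x\wedge v)\vee(y\wedge v)=(x\vee y)\wedge v\in I$ one sees that $v\in(I:x\vee y)\setminus I$. If this set contained a second element $z\neq v$, then $x-z-y$ would be a path avoiding $v$, contradicting that $x$ and $y$ lie in different components of $\Gamma_I(L)-v$. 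Hence $(I:x\vee y)\setminus I=\{v\}$, that is, $(I:x\vee y)=I\cup\{v\}$.

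It remains to extract the contradiction from $(I:x\vee y)=I\cup\{v\}$. As an intersection of ideals ($L$ being distributive), $(I:x\vee y)$ is an ideal, hence closed under joins; so for every $a\in I$ the join $a\vee v$ lies in $I\cup\{v\}$ and satisfies $a\vee v\geq v$, which rules out $a\vee v\in I$ (that would force $v\in I$). Therefore $a\vee v=v$, i.e. $a\leq v$, for all $a\in I$, so $v$ is an upper bound of $I$. Finally $v\neq 1$, because $1$ is adjacent to no vertex (for any vertex $y$ one has $1\wedge y=y\notin I$) and so $1$ is not itself a vertex. Consequently $v\in\bigcap_{a\in I}[a]^u$ with $v\neq 1$, contradicting the hypothesis, and we conclude that $\Gamma_I(L)$ has no cut point.

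I would expect the main obstacle to be the bookkeeping of the second step, namely pinning down exactly which elements are common neighbours of $x$ and $y$ and confirming that $x$ and $y$ themselves are excluded from $(I:x\vee y)$, rather than the final lattice-theoretic deduction, which is short. A secondary point to handle with care is the legitimacy of choosing the two neighbours $x,y$ of $v$ in distinct components; this rests only on the elementary structure of cut vertices and does not presuppose that $\Gamma_I(L)$ is connected.
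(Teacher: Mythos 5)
Your proof is correct. Note that this corrigendum only restates $[1,$ Proposition $3.6]$ and criticizes the example that was meant to accompany it; it does not reproduce a proof, so there is no argument of the paper to compare yours against line by line. All the steps check out: a cut point $v$ does have neighbours $x,y$ in distinct components of $\Gamma_I(L)-v$; distributivity gives $(I:x)\cap(I:y)=(I:x\vee y)$, which is an ideal containing $I$ and $v$ but not $x$ or $y$, and the separation of $x$ from $y$ forces $(I:x\vee y)=I\cup\{v\}$; closure of this ideal under joins then yields $a\vee v=v$, i.e.\ $a\leq v$, for every $a\in I$, and since $1$ is never a vertex you get $v\in\bigcap_{a\in I}[a]^u\setminus\{1\}$, contradicting the hypothesis. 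The pivotal intermediate fact you isolate --- that $I\cup\{v\}$ is an ideal --- is exactly the alternative appearing in Lemma 1.4 ($[1,$ Lemma $3.3]$), so your route is consonant with the machinery the paper already deploys, while being self-contained.
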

  Afkhami constructs an example $[1,$ Example $3.7]$ to show that the distributivity of 
  $L$ is a necessity in the previous proposition. Unfortunately, the example does not satisfy 
  the condition $ \bigcap_{i \in I}{[a]}^u =\{ 1\} $.
  \begin{example}$[1,$ Example $3.7]$
    Consider $I =\{ \phi , \{4\},\{4,5\}, \{4,5,6\},... \}$. Set 
    $L=I \cup \{\phi , \Bbb{N}, \{1\}, \{1,2\},\{3\}\}$. Then $L$ is a lattice under inclusion with $0= \phi$ and $1=\Bbb{N}$ but $L$ is not distributive. However $ \bigcap_{i \in I}{[a]}^u = \{\Bbb{N} - \{ 1, 2, 3\},\Bbb{N} \}$.
  \end{example}

\section{Corrigendum to $\sqrt{I} = I$}
Given any ideal $I$ of a lattice $L$, the notation $\sqrt{I}$ stands for the intersection of all prime ideals belonging $I$. Afkhami mistakenly define that a prime ideal $P$ of $L$ is called a prime ideal belonging to an ideal $I$ if $I \subseteq P$. In fact, a prime ideal $P$ of $L$ is called a prime ideal belonging to an ideal $I$ if $P \subseteq I$.  Afkhami also claims the following; 
 \begin{proposition}$[1,$ Proposition $3.12]$
  If $L$ is a distributive lattice, then $\sqrt{I}= I$. 
 \end{proposition}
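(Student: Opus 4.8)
The natural plan is to prove the two inclusions separately. The inclusion $\sqrt{I}\subseteq I$ should be immediate from the corrected definition: every prime ideal $P$ occurring in the defining intersection satisfies $P\subseteq I$, so the intersection of all of them is again contained in $I$ --- provided that at least one prime ideal is contained in $I$, so that the family over which we intersect is non-empty. I would flag this non-emptiness hypothesis at the very start, since if $I$ contains no prime ideal at all then $\sqrt{I}$ degenerates (the empty intersection being all of $L$) and the claimed equality already fails for every proper $I$.

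For the reverse inclusion $I\subseteq\sqrt{I}$ I would unwind what it actually demands: every $x\in I$ must lie in every prime ideal $P$ with $P\subseteq I$, that is, $I\subseteq P$ for all such $P$. Combined with $P\subseteq I$ this forces $P=I$, so the reverse inclusion can hold only when $I$ is itself prime and is the \emph{only} prime ideal contained in it. This is the step I expect to be the genuine obstacle, and where I would concentrate attention, because it is far more restrictive than the proposition suggests.

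Indeed, I do not expect this reverse inclusion to be provable under the convention $P\subseteq I$. Whenever $I$ properly contains a prime ideal --- for instance a non-minimal prime, or any proper ideal sitting above the prime ideal $\{0\}$ in a chain --- the intersection $\sqrt{I}$ is strictly smaller than $I$. I would therefore stop searching for a proof and instead construct an explicit distributive lattice together with an ideal $I$ for which $\sqrt{I}\neq I$, matching the counterexample announced in the abstract; the cleanest witnesses are ideals that fail to be prime, or prime ideals that strictly contain a smaller prime.

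Finally, for context I would record the \emph{true} statement that the proposition was presumably meant to assert, recovered by reverting to Afkhami's original convention $I\subseteq P$. Under that reading $\sqrt{I}=\bigcap\{P\text{ prime}: I\subseteq P\}$, and $\sqrt{I}=I$ becomes the classical fact that in a distributive lattice every ideal is the intersection of the prime ideals containing it. That version would be proved by the Stone separation theorem: given $x\notin I$, the principal filter $[x]^u$ is disjoint from $I$, so there is a prime ideal $P\supseteq I$ with $x\notin P$, whence $x\notin\sqrt{I}$. This contrast isolates precisely why the convention $P\subseteq I$ makes the stated proposition fail, and why a counterexample rather than a proof is the correct outcome.
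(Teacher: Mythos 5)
Your diagnosis matches the paper exactly: the proposition is false under the corrected convention ($P\subseteq I$), and the paper's own treatment is not a proof but a refutation by counterexample (Example 2.2), which falls precisely into the class of witnesses you identify --- a prime ideal strictly containing a smaller prime (there, $I=(z]$ and $P=(a]=\{0,c,a\}$ in the lattice of Figure 1, both prime, so $\sqrt{I}=I\cap P=P\neq I$). Your general analysis of when $I\subseteq\sqrt{I}$ can hold, and the aside on the Stone-separation version under the convention $I\subseteq P$, are correct and go beyond what the paper records; the only step left to complete the refutation is to actually write down one concrete witness, which your recipe produces immediately (even a three-element chain $0<a<1$ with $I=\{0,a\}$ and $P=\{0\}$ suffices).
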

 the previous proposition is not true. Consider the following example
 \begin{example}
 Consider the Hasse diagram in example 1.2 $L$. Take $ I=(Z]= \{0,c,a,x,y,z\} $, let $ P =\{0, c, a \} $. Clearly, $P$ and $I$ are the only prime ideals contained in $I$, hence $\sqrt{I}= I \cap p = P$.
 \end{example}
 Kindly note that this proposition is used mainly in $[1,$ Corollary $3.14]$. Moreover,  Afkhami presents a set of equivalent conditions in $[1$, Theorem $3.13]$. In which this propositon is used  specifically in the proof of $(ii)\implies(iii)$. In the following,  theorem 3.13 is presented with a corrigendum to the proof of $(ii)\implies(iii)$.
 \begin{theorem}$[1,$ Theorem $3.13]$  Let $L$ be a distributive lattice. Then the following statements are equivalent.
 \item{(i)} $\chi(\Gamma_I(L)) \leq \infty$.
 \item{(ii)} $\omega(\Gamma_I(L)) \leq \infty$.
 \item{(iii)}The ideal $I$ is the intersection of a finite number of prime ideals.
 \end{theorem}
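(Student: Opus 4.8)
The three conditions are most naturally proved in the cyclic order $(iii)\Rightarrow(i)\Rightarrow(ii)\Rightarrow(iii)$, with nearly all the work in the last arrow --- precisely the implication whose original justification rested on the false identity $\sqrt{I}=I$. For $(iii)\Rightarrow(i)$, write $I=P_1\cap\cdots\cap P_n$ with each $P_k$ prime, and colour every vertex $x\in L\setminus I$ by the subset $c(x)=\{k:\ x\in P_k\}$ of $\{1,\dots,n\}$. Since $x\notin I$, some $P_k$ omits $x$, so $c(x)$ is a proper subset; and if $x\wedge y\in I\subseteq P_k$ for all $k$, primeness forces $x\in P_k$ or $y\in P_k$, whence $c(x)\cup c(y)=\{1,\dots,n\}$. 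As two proper subsets whose union is the whole set cannot coincide, adjacent vertices get distinct colours, giving $\chi(\Gamma_I(L))\le 2^{n}-1<\infty$. The step $(i)\Rightarrow(ii)$ is just the universal inequality $\omega(G)\le\chi(G)$, since a clique on $k$ vertices needs $k$ colours.

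The substance is $(ii)\Rightarrow(iii)$. Assume $\omega(\Gamma_I(L))=m<\infty$ and fix a clique $x_1,\dots,x_m$ of maximum size, so the $x_i$ lie in $L\setminus I$ with $x_i\wedge x_j\in I$ for $i\ne j$ (the degenerate case of an edgeless $\Gamma_I(L)$ just says $I$ is itself prime, so (iii) is immediate there). The plan is to prove the single identity
\[
 I=\bigcap_{i=1}^{m}(I:x_i),
\]
and then to show each $(I:x_i)$ is prime; since $(I:x_i)$ is already an ideal of the distributive lattice $L$, this displays $I$ as an intersection of $m$ prime ideals, which is exactly (iii). The inclusion $I\subseteq\bigcap_i(I:x_i)$ is clear because ideals are down-closed. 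For the reverse inclusion I argue by contradiction: if $z\notin I$ lay in every $(I:x_i)$, then $z\wedge x_i\in I$ for all $i$ and $z\ne x_i$, so $\{x_1,\dots,x_m,z\}$ would be a clique of size $m+1$, contradicting maximality.

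The crux --- and the genuine replacement for $\sqrt{I}=I$ --- is the primeness of each $(I:x_i)$, where maximality again does the work. Suppose $a\wedge b\in(I:x_i)$ but $a,b\notin(I:x_i)$, and set $a'=a\wedge x_i$, $b'=b\wedge x_i$. Then $a',b'\notin I$ while $a'\wedge b'=a\wedge b\wedge x_i\in I$, so $a'$ and $b'$ are adjacent; moreover $a'\wedge x_j\le x_i\wedge x_j\in I$ for each $j\ne i$, and likewise for $b'$, so replacing $x_i$ by the two vertices $a',b'$ yields the clique $\{a',b'\}\cup\{x_j:\ j\ne i\}$ of size $m+1$, again contradicting $\omega=m$. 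I expect this substitution of $x_i$ by $a\wedge x_i$ and $b\wedge x_i$ to be the main obstacle: it is the one new idea, and one must carefully check that the enlarged vertex set really has $m+1$ \emph{distinct} members none of which falls into $I$ (each check reduces to ``$u\wedge x_j\in I$ but $u=x_j\in L\setminus I$ is impossible''). Once primeness is secured, combining it with the displayed identity closes the cycle and proves the theorem.
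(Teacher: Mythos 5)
Your proof is correct, but it takes a genuinely different route from the paper's. The corrigendum disposes of the only implication it revisits, $(ii)\Rightarrow(iii)$, in a single line: it invokes Theorem 3.13 of reference $[2]$ (a result on identity-summand graphs of filters) ``by using the duality theorem,'' and it does not re-argue $(iii)\Rightarrow(i)$ or $(i)\Rightarrow(ii)$ at all. You instead give a self-contained proof of the whole cycle. Your $(iii)\Rightarrow(i)$ colouring by the sets $c(x)=\{k: x\in P_k\}$ and the bound $\chi\le 2^n-1$ are sound (two proper subsets of $\{1,\dots,n\}$ with full union cannot coincide), and $(i)\Rightarrow(ii)$ is the standard inequality $\omega\le\chi$. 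The real gain is in $(ii)\Rightarrow(iii)$: taking a maximum clique $x_1,\dots,x_m$, proving $I=\bigcap_i(I:x_i)$, and extracting primeness of each $(I:x_i)$ by the substitution of $x_i$ with $a\wedge x_i$ and $b\wedge x_i$ is exactly the right replacement for the false identity $\sqrt{I}=I$, and your distinctness and non-membership checks (each reducing to ``$u=x_j$ would force $x_j\in I$'') all go through; one should also note that $(I:x_i)$ is proper since $x_i\wedge x_i=x_i\notin I$. What your approach buys is an explicit, verifiable construction of the finitely many prime ideals inside the same framework as the rest of the corrigendum, rather than an appeal to an external dual statement whose translation the reader must supply; what the paper's approach buys is brevity and reuse of an already published argument. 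Only two small points deserve a sentence in a final write-up: the theorem implicitly assumes $I$ is proper (otherwise the empty graph does not force $I$ prime), and the case $m=1$ cannot occur since every vertex of $\Gamma_I(L)$ has a neighbour by definition.
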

 \begin{proof}
 $(ii) \implies (iii)$ This follows directly from $[2,$ Theorem $3.13]$ by using the duality theorem. 
 \end{proof}

\end{document}